\numberwithin{equation}{section}
\numberwithin{figure}{section}
\theoremstyle{plain}
\newtheorem{thm}{Theorem}
  \theoremstyle{definition}
  \theoremstyle{definition}
  \newtheorem*{defn*}{Definition}
  \theoremstyle{plain}
  \newtheorem*{thm*}{Theorem}
  \theoremstyle{plain}
  \theoremstyle{remark}
  \newtheorem*{rem*}{Remark}
  \theoremstyle{definition}
  \newtheorem*{example*}{Example}
  \theoremstyle{plain}
  \theoremstyle{plain}
  \theoremstyle{remark}
  \theoremstyle{plain}
\newcommand{\field}[1]{\mathbb{#1}}
\newcommand{\Q}{\field{Q}}
\newcommand{\QQ}{\field{Q}}
\newcommand{\CC}{\field{C}}
\newcommand{\Fq}{\field{F}_q}
\newcommand{\FFq}{\field{F}_q}
\newcommand{\Z}{\field{Z}}
\def\blfootnote{\xdef\@thefnmark{}\@footnotetext}
\begin{document}

\title{Characterizations of the $d$th-power residue matrices over finite fields}

\author{Evan P. Dummit}
\address{Evan P. Dummit, Arizona State University, School of Mathematical and Statistical Sciences, P.O. Box 871804, Tempe AZ 85287-1804}
\email{evan.dummit@asu.edu}

\begin{abstract}
In a recent paper of the author with D. Dummit and H. Kisilevsky, we constructed a collection of matrices defined by quadratic residue symbols, termed ``quadratic residue matrices'',  associated to the splitting behavior of prime ideals in a composite of quadratic extensions of $\QQ$, and proved a simple criterion characterizing such matrices.  We then analyzed the analogous classes of matrices constructed from the cubic and quartic residue symbols for a set of prime ideals of $\QQ(\sqrt{-3})$ and $\QQ(i)$, respectively.  In this paper, the goal is to construct and study the finite-field analogues of these residue matrices, the  ``$d$th-power residue matrices'', using the general $d$th-power residue symbol over a finite field.
\end{abstract}

\maketitle
\blfootnote{2010 \emph{Mathematics Subject Classification.} Primary 11A15 ; Secondary 11T06, 12E20, 05B20}
\blfootnote{\emph{Keywords}: power residues, reciprocity laws, power residue matrices, residue symbols.}

\section{The $d$th-Power Residue Matrices}

Our goal is to study the appropriate analogue of the residue matrices constructed in \cite{D-D-K} in the finite-field setting.

Let $q$ be a prime power and $d$ be a positive integer with $d$ dividing $q-1$, and let $\Fq$ denote the finite field with $q$ elements.  We begin by recalling the standard definition and some basic properties of the $d$th-power residue symbol for polynomials in $\Fq[t]$ .  

\medskip
\noindent
\begin{defn*} If $P$ is a monic irreducible polynomial over $\Fq$ and $a \in \Fq[t]$ is relatively prime to $P$, the $d$th-power residue symbol $\left( \dfrac{a}{P} \right)_d$ 
is defined to be the unique $d$th root of unity in $\Fq$ with
$$
\left(\dfrac{a}{P}\right)_d \equiv a^{ (\left| P \right| -1)/d }\,\, (\mathrm{mod}\,\,P) 
$$
where $\left| P \right|$ denotes the norm of $P$, defined as $q^{\deg(P)}$, the cardinality of $\Fq[t] / (P)$.
\end{defn*}

We remark here that the $d$th power residue map $\left( \dfrac{\cdot}{P} \right)_d$ is a surjective homomorphism from the multiplicative group of nonzero residue classes modulo $P$ to the group of $d$th roots of unity in $\Fq$.

It will be convenient instead to consider the $d$th-power residue symbol as taking values in $\CC$: to this end, choose a fixed isomorphism $\varphi$ of the $d$th roots of unity in $\Fq$ with the complex $d$th roots of unity.

\noindent
\begin{defn*} If $P$ is a monic irreducible polynomial over $\Fq$ and $a \in \Fq[t]$, we define the modified $d$th-power residue symbol $\left[ \dfrac{a}{P} \right] _d$ to be the complex root of unity with $\left[ \dfrac{a}{P} \right] _d = \varphi \left( \left( \dfrac{a}{P} \right)_d \right)$.
\end{defn*}

We remark here (and will justify later) that the resulting class of matrices is independent of the isomorphism $\varphi$: any other isomorphism will produce the same class of matrices.

\noindent
\begin{defn*}
Let $d$ be a positive integer.  A ``cyclotomic sign matrix of $d$th roots of unity'' is an $n \times n$ matrix whose diagonal entries are all 0 and whose off-diagonal entries are all complex $d$th roots of unity.
\end{defn*}

With the correct class of matrices in hand, we can now define the $d$th-power residue matrices.

\begin{defn*}
Let $q$ be a prime power and $d$ be an integer dividing $q-1$.  The ``$d$th-power residue'' matrix associated to the monic irreducible polynomials $P_{1}$, $P_{2}$, $\dots$, $P_{n}$ in $\Fq[t]$ is the $n\times n$ matrix whose $(i,j)$-entry is the $d$th power residue symbol $\left[ \dfrac{P_{i}}{P_{j}} \right] _d$.
\end{defn*}

Notice that the $d$th-power residue matrices are cyclotomic sign matrices of $d$th roots of unity.  We would like to characterize, for a given $d$ and $q$, which cyclotomic sign matrices of $d$th roots of unity actually arise as the $d$th-power residue matrix associated to some set of monic irreducible polynomials over $\Fq$.  We should naturally expect $d$th-power reciprocity to impose some conditions.  

Over $\FFq[t]$ the $d$th-power reciprocity law is as follows (cf. Theorem 3.3 of \cite{R}): for any monic irreducible polynomials $P$ and $Q$ in $\Fq[t]$,
$$\left(\dfrac{P}{Q}\right)_{d} = (-1)^{(q-1)\deg(P)\deg(Q)/d} \left(\dfrac{Q}{P}\right)_{d}$$
and for the modified residue symbols the statement is the same [except with square brackets].

\section{Characterizations of the $d$th-Power Residue Matrices}

Observe that if $(q-1)/d$ is even then the $d$th-power reciprocity law is symmetric, and thus all of the $d$th-power matrices are symmetric.  The converse is also true:

\begin{thm} Let $q$ be a prime power and $d$ be an integer dividing $q-1$ with $(q-1)/d$ even. If $M$ is an $n \times n$ cyclotomic sign matrix of $d$th roots of unity, then the following are equivalent:

\begin{enumerate} [label=(\alph*)]
\item The matrix $M$ is symmetric.

\item The matrix $M$ is the $d$th-power residue matrix associated to
distinct monic irreducible polynomials $P_{1}$, $P_{2}$, $\dots$, $P_{n}$ in $\Fq[t]$.
\end{enumerate}
\end{thm}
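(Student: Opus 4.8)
The plan is to separate the two implications, with all the work in one direction. The implication $(b)\Rightarrow(a)$ is immediate from the $d$th-power reciprocity law recalled above: since $(q-1)/d$ is even, the sign $(-1)^{(q-1)\deg(P_i)\deg(P_j)/d}$ equals $1$ for all $i,j$, so $\left[\dfrac{P_i}{P_j}\right]_d=\left[\dfrac{P_j}{P_i}\right]_d$ and the associated residue matrix is symmetric (its diagonal being $0$ by the convention built into the definition of a cyclotomic sign matrix, and its off-diagonal symbols being defined because distinct monic irreducibles are pairwise coprime). The content of the theorem is therefore the reverse implication $(a)\Rightarrow(b)$, which I would prove by constructing the polynomials $P_1,\dots,P_n$ one at a time.

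Concretely, I would argue by induction on $n$: supposing that distinct monic irreducibles $P_1,\dots,P_{n-1}$ have already been chosen whose $d$th-power residue matrix is the leading $(n-1)\times(n-1)$ principal submatrix of $M$, I want to adjoin a monic irreducible $P_n\notin\{P_1,\dots,P_{n-1}\}$ realizing the full matrix $M$. Adjoining $P_n$ changes no entry $\left[\dfrac{P_i}{P_j}\right]_d$ with $i,j\le n-1$, so only the last row and column need attention; and because $M$ is symmetric and $d$th-power reciprocity is symmetric under our hypothesis, it suffices to arrange $\left[\dfrac{P_n}{P_i}\right]_d=M_{n,i}$ for $1\le i\le n-1$, since then automatically $\left[\dfrac{P_i}{P_n}\right]_d=\left[\dfrac{P_n}{P_i}\right]_d=M_{n,i}=M_{i,n}$. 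Now $\left[\dfrac{P_n}{P_i}\right]_d$ depends only on $P_n\bmod P_i$, and since $\left(\dfrac{\cdot}{P_i}\right)_d$ is a surjective homomorphism from $(\Fq[t]/P_i)^\times$ onto the group of $d$th roots of unity in $\Fq$, there is a polynomial $a_i$ coprime to $P_i$ with $\left[\dfrac{a_i}{P_i}\right]_d=M_{n,i}$. Using the Chinese Remainder Theorem I pick $a\in\Fq[t]$ with $a\equiv a_i\pmod{P_i}$ for every $i$; then $a$ is coprime to $P_1\cdots P_{n-1}$.

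To finish I would invoke the function-field analogue of Dirichlet's theorem on primes in arithmetic progressions (cf. \cite{R}): the residue class of $a$ modulo $P_1\cdots P_{n-1}$ contains infinitely many monic irreducible polynomials, so I may choose such a $P_n$ distinct from $P_1,\dots,P_{n-1}$, and then $\left[\dfrac{P_n}{P_i}\right]_d=\left[\dfrac{a}{P_i}\right]_d=M_{n,i}$ for each $i$, completing the induction (the base case $n=1$ being trivial, as the $1\times 1$ residue matrix is $[0]$). The only ingredient beyond elementary manipulations is this function-field Dirichlet theorem, which is standard, so I do not anticipate a serious obstacle; the one point requiring a little care is confirming that reciprocity really is symmetric under the hypothesis $(q-1)/d$ even, so that controlling $P_n$ modulo the earlier primes suffices and no auxiliary condition is needed. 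This construction also makes transparent the earlier remark that the class of realizable matrices is independent of the choice of isomorphism $\varphi$: for $(q-1)/d$ even that class is exactly the set of all symmetric cyclotomic sign matrices of $d$th roots of unity, a description not involving $\varphi$.
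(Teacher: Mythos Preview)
Your proposal is correct and follows essentially the same route as the paper: the implication $(b)\Rightarrow(a)$ is immediate from symmetric $d$th-power reciprocity, and for $(a)\Rightarrow(b)$ you build the $P_i$ inductively by choosing suitable residue classes modulo the earlier $P_j$ (using surjectivity of the symbol), combining them via the Chinese Remainder Theorem, and then invoking Kornblum's function-field Dirichlet theorem to find a new monic irreducible in that class. The only minor embellishment over the paper is your explicit remark that infinitely many such irreducibles exist, ensuring $P_n$ can be taken distinct from $P_1,\dots,P_{n-1}$; this is a welcome clarification but not a genuinely different argument.
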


\begin{proof}
(a) implies (b): 
We inductively construct monic irreducible polynomials $P_{1}, \dots , P_{n}$ for which
$M$ is the $d$th-power residue matrix.  For the base case, let $P_{1}$ be any
monic irreducible polynomial of positive degree. For the inductive step, suppose that
$P_{1},$ ... , $P_{k}$ are monic irreducible polynomials such that $\left[\dfrac{P_{i}}{P_{j}}\right]_{d}=m_{i,j}$
for $1\leq i,j\leq k$. For each $1\leq j\leq k$, choose a nonzero
residue class $u_{j}$ modulo $P_{j}$ such that $\left[\dfrac{u_{j}}{P_{j}}\right]_{d}=m_{k+1,j}$.
By the Chinese Remainder Theorem and Kornblum's function-field analogue of Dirichlet's Theorem on primes
in arithmetic progression (cf. Theorem 4.7 of [R]) we may choose a monic irreducible polynomial $P_{k+1}$ satisfying the congruences
$P_{k+1}\equiv u_{j}$ (mod $P_{j}$) for all $1 \leq j \leq k$. By construction, we have $\left[\dfrac{P_{k+1}}{P_{j}}\right]_{d}=m_{k+1,j}$
for all $1\leq j\leq k$, and $d$th-power reciprocity along with the form of $M$ ensures that also $\left[\dfrac{P_{i}}{P_{k+1}}\right]_{d}=m_{i,k+1}$ for all $1\leq i\leq k$ is satisfied. Thus, $M$ is the $d$th-power residue matrix
associated to $P_{1}, \dots , P_{n}$, as claimed.

(b) implies (a): This follows immediately from $d$th-power reciprocity, since $$\left(\dfrac{P_i}{P_j}\right)_{d} = \left(\dfrac{P_j}{P_i}\right)_{d}$$ for all pairs $(i,j)$ with $i \neq j$.  
\end{proof}

When $(q-1)/d$ is odd, $d$th-power reciprocity takes a form quite similar to quadratic reciprocity over $\Q$, with polynomials of even and odd degree behaving like rational primes congruent to $1$ and $3$ (mod $4$), respectively: if either $P$ or $Q$ has even degree, then $\left(\frac{P}{Q}\right)_{d} = \left(\frac{Q}{P}\right)_{d}$, and if both have odd degree then $\left(\frac{P}{Q}\right)_{d} = -\left(\frac{Q}{P}\right)_{d}$.

Observe that the property of whether a cyclotomic sign matrix is a $d$th-power residue matrix is invariant under conjugation by a permutation matrix (simply permute the underlying polynomials accordingly).  If we reorder the polynomials so that the first $s$ have odd degree and the remaining $n-s$ have even degree, then by $d$th-power reciprocity the associated $d$th-power residue matrix $M$ has the form
$$
\begin{pmatrix}
A & B \\
B^t & S
\end{pmatrix}
$$
where $A$ is an $s \times s$ skew-symmetric cyclotomic sign matrix of $d$th roots of unity, $S$ is an $(n-s) \times (n-s)$ symmetric cyclotomic sign matrix of $d$th roots of unity, and $B$ is an $s\times(n-s)$ matrix all of whose entries are $d$th roots of unity.  (Here $B^t$ denotes the transpose of $B$.)

We now show that every matrix having the form above is a $d$th-power residue matrix when $(q-1)/d$ is odd, and give an additional characterization:

\begin{thm}
 Let $q$ be a prime power and $d$ be an integer dividing $q-1$ with $(q-1)/d$ odd.  If $M$ is an $n \times n$ cyclotomic sign matrix of $d$th roots of unity, then the following are equivalent:
\begin{enumerate} [label=(\alph*)]
\item There exists an integer $s$ with $1 \le s \le n$ such that the matrix $M$ can be 
conjugated by a permutation matrix into a block matrix of the form
$$
\begin{pmatrix}
A & B \\
B^t & S
\end{pmatrix}
$$
where $A$ is an $s \times s$ skew-symmetric cyclotomic sign matrix of $d$th roots of unity, $S$ is an $(n-s) \times (n-s)$ symmetric cyclotomic sign matrix of $d$th roots of unity, and $B$ is an $s\times(n-s)$ matrix all of whose entries are $d$th roots of unity.  (Here $B^t$ denotes the transpose of $B$.)

\item The matrix $M$ is the $d$th-power residue matrix associated to
a set of distinct monic irreducible polynomials $P_{1}, P_{2}, \dots, P_{n}$ in $\Fq[t]$.

\item If $M = (m_{j,k})$, then $m_{j,k} = \pm m_{k,j}$ for all $j,k$ with $1 \le j,k \le n$, and there exists
an integer $s$ with $1 \leq s \leq n$ such that the diagonal entries of $M \overline{M}$
consist of $s$ occurrences of $n+1-2s$ and $n-s$ occurrences of $n-1$.

\end{enumerate}
\end{thm}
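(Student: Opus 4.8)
The plan is to establish the cyclic chain $(a)\Rightarrow(b)\Rightarrow(c)\Rightarrow(a)$. For $(a)\Rightarrow(b)$ I would adapt the inductive argument from the proof of the preceding theorem, adding one layer of bookkeeping. Since being a $d$th-power residue matrix is invariant under conjugation by a permutation matrix, I may assume $M$ already has the block shape exhibited in (a). I then construct distinct monic irreducibles $P_1,\dots,P_n$ inductively so that $P_1,\dots,P_s$ have odd degree, $P_{s+1},\dots,P_n$ have even degree, and $\left[\frac{P_i}{P_j}\right]_d=m_{i,j}$ throughout. At the inductive step, with $P_1,\dots,P_k$ in hand, for each $j\le k$ I pick a nonzero residue $u_j\bmod P_j$ with $\left[\frac{u_j}{P_j}\right]_d=m_{k+1,j}$ — possible because the $d$th-power residue map modulo $P_j$ is surjective onto the $d$th roots of unity and $\varphi$ is an isomorphism — and then, by the Chinese Remainder Theorem together with Kornblum's theorem, I choose a monic irreducible $P_{k+1}\equiv u_j\pmod{P_j}$ for all $j$ of the prescribed degree parity; distinctness is automatic since each $u_j\ne 0$. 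The step that needs care is the reciprocity bookkeeping: because $(q-1)/d$ is odd, the reciprocity sign relating $\left[\frac{P_i}{P_{k+1}}\right]_d$ and $\left[\frac{P_{k+1}}{P_i}\right]_d$ equals $-1$ exactly when $P_i$ and $P_{k+1}$ both have odd degree, and this is matched precisely by the skew-symmetry of $A$ against the symmetry of $S$ and the $B/B^t$ pairing, yielding $\left[\frac{P_i}{P_{k+1}}\right]_d=m_{i,k+1}$ for all $i\le k$.

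For $(b)\Rightarrow(c)$ the argument is a short trace computation. Reciprocity gives $m_{j,k}=\varepsilon_{j,k}\,m_{k,j}$ with $\varepsilon_{j,k}=\varepsilon_{k,j}\in\{\pm1\}$ equal to $-1$ exactly when $P_j$ and $P_k$ both have odd degree, so in particular $m_{j,k}=\pm m_{k,j}$. Letting $s$ be the number of the $P_i$ of odd degree (and $s=1$ in the vacuous case $s=0$), I compute $(M\overline M)_{ii}=\sum_{k\ne i}m_{i,k}\,\overline{m_{k,i}}=\sum_{k\ne i}\varepsilon_{i,k}$, which equals $n-1$ when $\deg P_i$ is even and $(n-s)-(s-1)=n+1-2s$ when $\deg P_i$ is odd; this is exactly the claimed distribution of diagonal entries.

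For $(c)\Rightarrow(a)$, the key observation is that the diagonal of $M\overline M$ already recovers the required partition. Define $\varepsilon_{j,k}\in\{\pm1\}$ by $m_{j,k}=\varepsilon_{j,k}m_{k,j}$ for $j\ne k$ — well-defined and symmetric because the off-diagonal entries are nonzero roots of unity — and set $o_i=\#\{k\ne i:\varepsilon_{i,k}=-1\}$, so that $(M\overline M)_{ii}=(n-1)-2o_i$. The hypothesis then forces $o_i\in\{0,s-1\}$, with $o_i=s-1$ for exactly $s$ indices — call this set $T$ — and $o_i=0$ for the remaining $n-s$ indices, say $U$ (when $s\le1$ the two values coincide, $M$ is symmetric, and one simply takes $T$ to be a single index). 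For $i\in U$ one has $\varepsilon_{i,k}=+1$ for every $k$; hence for $i\in T$ all $s-1$ of its "$-1$"-neighbors must lie inside $T\setminus\{i\}$, a set of exactly $s-1$ elements, so $\varepsilon$ is identically $-1$ on pairs within $T$ and identically $+1$ on every other pair. Conjugating by the permutation that lists the indices of $T$ first then displays $M$ in the block form of (a), with $A=M|_{T\times T}$ skew-symmetric, $S=M|_{U\times U}$ symmetric, and the lower-left block equal to the transpose of the upper-right block.

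I expect the combinatorial step $(c)\Rightarrow(a)$ to be the crux of the argument, though even it becomes routine once one notices the identity $(M\overline M)_{ii}=(n-1)-2o_i$; $(b)\Rightarrow(c)$ is then a one-line trace computation and $(a)\Rightarrow(b)$ a careful adaptation of the preceding theorem. The most delicate point elsewhere is the parity-and-reciprocity-sign bookkeeping in $(a)\Rightarrow(b)$, where I must invoke the standard refinement of Kornblum's theorem guaranteeing, in any fixed coprime residue class, monic irreducibles of every sufficiently large degree — in particular of both parities. Finally, I would note as a byproduct that if $\varphi$ is replaced by any other isomorphism of the $d$th roots of unity in $\Fq$ with those in $\CC$, the entries of $M$ are simply replaced by a fixed power of themselves applied entrywise; this preserves both the relation "$m_{j,k}=\pm m_{k,j}$" and the diagonal of $M\overline M$, so by the equivalence $(b)\Leftrightarrow(c)$ the class of $d$th-power residue matrices does not depend on $\varphi$, as promised earlier.
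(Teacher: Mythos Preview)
Your proposal is correct and follows essentially the same route as the paper: the same inductive construction for $(a)\Rightarrow(b)$ with the added degree-parity constraint (where you are right to invoke the quantitative form of Kornblum's theorem to get irreducibles of both parities), the same reciprocity-based diagonal computation for $(b)\Rightarrow(c)$, and the same combinatorial extraction of the block structure for $(c)\Rightarrow(a)$ via the identity $(M\overline{M})_{ii}=(n-1)-2o_i$. Your handling of the degenerate case $s=0$ (relabeled as $s=1$) and your closing remark on $\varphi$-independence are also in line with the paper.
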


\begin{proof}
(a) implies (b): Follows by the same proof as in Theorem 1, except we additionally impose the condition that the degree of the polynomial $P_{k+1}$ is odd if $k \leq s$ or even if $k > s$, in order to obtain the correct entries below the diagonal.

(b) implies (c): Suppose that $M$ is the $d$th-power residue matrix associated
to the distinct monic irreducible polynomials $P_{1}, \dots , P_{n}$.
The first part of the criterion in (c) follows immediately from $d$th-power reciprocity.

For the second part, rearrange the polynomials, if necessary, so that the first $s$ have odd degree and the remaining $n-s$ have even degree.  Note also that for any $d$th root of unity $r$ in $\Fq$, $\varphi(r^{-1}) = \varphi(r)^{-1} = \overline{\varphi(r)}$.

For $1 \leq j \leq s$, the $j$th diagonal element of $M\overline{M}$ is
$$
(M\overline{M})_{j,j}= \sum_{k=1}^{n} \left[ \dfrac{P_{j}}{P_{k}} \right]_{d} \overline{ \left[ \dfrac{P_{k}}{P_{j}} \right]_{d} } = \sum_{k=1}^{n} \varphi \left(\left( \dfrac{P_{j}}{P_{k}} \right)_{d} \left( \dfrac{P_{k}}{P_{j}} \right)_{d}^{-1}  \right)= n+1-2s 
$$
since by $d$th-power reciprocity the first $s$ terms are $-1$ (except
for the $j$th, which is 0), and the other $n-s$ terms are $+1$.

For $s+1 \leq j \leq n$, the $j$th diagonal element of $M\overline{M}$ is 
$$
(M\overline{M})_{j,j}= \sum_{k=1}^{n} \left[ \dfrac{P_{j}}{P_{k}} \right]_{d} \overline{ \left[ \dfrac{P_{k}}{P_{j}} \right]_{d} } = \sum_{k=1}^{n} \varphi \left(\left( \dfrac{P_{j}}{P_{k}} \right)_{d} \left( \dfrac{P_{k}}{P_{j}} \right)_{d}^{-1}  \right) = n-1 
$$
since by $d$th-power reciprocity all terms are $+1$ (except for the $j$th, which is 0), proving (c).

(c) implies (a): Suppose that $m_{j,k}=\pm m_{k,j}$ for each pair
$(j,k)$, and that the diagonal entries of the matrix $M \overline{M}$
consist of $s$ occurrences of $n+1-2s$ and $n-s$ occurrences of
$n-1$. 

Whenever $j\neq k$, by the assumptions that $m_{j,k}=\pm m_{k,j}$ and that the $m_{j,k}$ are $d$th roots of unity, we see that $m_{j,k} \overline{m_{k,j}}$ is either $+1$ (when $m_{j,k}= m_{k,j}$) or $-1$ (when $m_{j,k}= -m_{k,j}$).

By conjugating $M$ by an appropriate permutation matrix we may place the $s$ occurrences of $n+1-2s$
in the first $s$ rows of $M \overline{M}$. 
For $s < j \leq n$, we have 
$$
(M \overline{M})_{j,j}= \sum_{k=1}^{n} m_{j,k} \overline{m_{k,j}} = n-1 ,
$$
but since there are only $n-1$ nonzero terms in the sum, we necessarily have 
$m_{j,k} \overline{m_{k,j}}=1$ for each $j \neq k$, and hence $m_{j,k}=m_{k,j}$ for all
$1\leq k \leq n$ and $s < j \leq n$. 

For $1 \leq j \leq s$, we have
$$
(M \overline{M})_{j,j}= \sum_{k=1}^{n} m_{j,k} \overline{m_{k,j}} = n+1-2\cdot\#\{1 \leq k \leq s\,:\, m_{j,k} \overline{m_{k,j}} = -1 \}
$$
since $m_{j,k} \overline{m_{k,j}} = +1$ whenever $j>s$ and $m_{j,k}\overline{m_{k,j}}$
can only be $1$ or $-1$. But now since there at most $s$ terms
in the count, and $(M\overline{M})_{j,j}=n+1-2s$, we see that $m_{j,k} \overline{m_{k,j}}=-1$ and hence that $m_{j,k}=-m_{k,j}$ for $1\leq k \leq s$.  Thus $M$ has the form in (a), completing the proof.   
\end{proof}

\begin{rem*}
Observe that both condition (a) of Theorem 1, and conditions (a) and (c) of Theorem 2, are wholly independent of the choice of isomorphism $\varphi$ between the $d$th roots of unity in $\Fq$ and the complex $d$th roots of unity, and therefore we see that the classes of $d$th-power residue matrices are the same no matter which $\varphi$ is used.
\end{rem*}

In a similar manner to the way the quadratic, cubic, and quartic residue matrices classify certain types of decomposition configurations over number fields (cf. \cite{D-K}), the fact that not every $n\times n$ cyclotomic sign matrix of $d$th roots of unity arises as a $d$th-power residue matrix has implications for the possible decomposition configurations for primes in abelian extensions of $\Fq(t)$ with Galois group $(\Z / d \Z)^n$.

\section*{Acknowledgements}
The author would like to thank David Dummit for his many helpful comments during the preparation of this paper.

\bibliographystyle{plain}
\bibliography{qcqmatrices}

%
%
%
%
%
%
%
%
%

\end{document}